\newtheorem{theorem}{Theorem}  
\newtheorem{lemma}[theorem]{Lemma}
\newtheorem{observation}[theorem]{Observation}
\newtheorem{corollary}[theorem]{Corollary}
\renewcommand{\epsilon}{\varepsilon}
\numberwithin{theorem}{section}
\numberwithin{equation}{section}
\def\ch{{\rm ch}}
\def\pr{{\rm Pr}}
\title{Dynamic choosability of triangle-free graphs and sparse random graphs}
\author{Jaehoon Kim and Seongmin Ok}
\thanks{The research of the first author was partially supported by the European Research Council under the European Union's Seventh Framework Programme (FP/2007--2013) / ERC Grant Agreements no. 306349 (J.~Kim).\\
The research of the second author is partially supported by Basic Science Research Program through the National Research Foundation of Korea(NRF) funded by the Ministry of Education(NRF-2016R1D1A1B03932398).}
\begin{document}

\begin{abstract}
 The \textit{$r$-dynamic choosability} of a graph $G$, written $\ch_r(G)$, is the least $k$ such that whenever each vertex is assigned a list of at least $k$ colors a proper coloring can be chosen from the lists so that every vertex $v$ has at least $\min\{d_G(v),r\}$ neighbors of distinct colors. Let $\ch(G)$ denote the choice number of $G$. In this paper, we prove $\ch_r(G)\leq (1+o(1))\ch(G)$ when $\frac{\Delta(G)}{\delta(G)}$ is bounded. 
 We also show that there exists a constant $C$ such that the random graph $G=G(n,p)$ with $\frac{6\log(n)}{n} < p \leq \frac{1}{2}$ almost surely satisfies $\ch_2(G) \leq \ch(G)+C$. Also if $G$ is a triangle-free regular graph, then we have $\ch_2(G)\leq \ch(G)+86$. \end{abstract}

\date{\today}

\maketitle

\section{Introduction} \label{Introduction}

The \emph{Secret Sharing Scheme} is a method to distribute an important \emph{key} or \emph{secret} amongst a group of people, each of whom is assigned a share of the data. Retrieving the full information requires certain number of distinct shares, called the \emph{threshold}, to be collected. Let us say the proximity between the participants are modeled by a graph. To ensure quick access to the key whenever needed, we want the retrieval be achievable in the neighborhood of each vertex. The problem of finding how many distinct shares in total are necessary can be expressed in terms of the $r$-dynamic coloring defined below.

In a communication network, two adjacent computers must be assigned different resources. To make many resources accessible, each computer need to be able to obtain many resources amongst its neighbors.  However, expecting all neighbors of each computer to have distinct resources demands too many types of resources.
Alternatively, we can specify a threshold $r$ such that for a computer with $d$ neighbors must have access to at least $\min\{r,d\}$ distinct types of resources in its neighbors.  We ask how many types of resources are required in this model by considering $r$-dynamic coloring defined below.

For $k\in \mathbb{N}$, we denote $[k]:= \{1,\dots,k\}$. For a graph $G$ and a vertex $v\in V(G)$, we denote $N_G(v)$ to be the set of neighbors of $v$ in $G$ and $d_G(v):= |N_G(v)|.$ Let $\Delta(G) := \max_{v\in V(G)}\{ d_{G}(v)\}, \delta(G):=\min_{v\in V(G)}\{d_{G}(v)\}$. For $d\in \mathbb{N}$, we say a graph is $d$-regular if $\Delta(G)=\delta(G)=d$. In this paper, $\log$ denote the natural logarithm.

A \emph{proper $k$-coloring} of a graph $G$ is an assignment $f : V(G) \longrightarrow C$ with a set $C$ of $k$ colors such that $f(u), f(v)$ are different whenever $u,v$ are adjacent in $G$. The smallest number $k$ such that a proper $k$-coloring of $G$ exists is called the \emph{chromatic number} of $G$, denoted by $\chi(G)$, which is one of the most important graph parameters.

The concept of the dynamic coloring was first introduced by Montgomery \cite{M}. 
For given $r, k\in \mathbb{N}$, an \emph{$r$-dynamic $k$-coloring} $f$ of a graph $G$ is a proper $k$-coloring with the additional assumption that $|f(N(v))| \geq \min \{r, d(v)\}$ for each vertex $v$. 

The \emph{$r$-dynamic chromatic number} of $G$, denoted by $\chi _r(G)$, is defined as the smallest number $k$ such that an $r$-dynamic $k$-coloring of $G$ exists. A \emph{dynamic coloring} and the \emph{dynamic chromatic number} of a graph $G$ refer to a \emph{2-dynamic coloring} and the \emph{2-dynamic chromatic number} of $G$, respectively.

For a given graph $G$, the \emph{square} $G^2$ of $G$ denote the graph obtained from $G$ by adding all edges joining two nonadjacent vertices sharing a common neighbor. Coloring the square of a graph has been extensively studied both combinatorially and algorithmically with applications in communication network. One of the motivations of $r$-dynamic coloring is that it gives a spectrum between chromatic numbers $\chi(G)$ and $\chi(G^2)$ because of the following observation:
$$\chi (G) \leq \chi_2(G) \leq \chi_3(G) \leq \cdots \leq \chi_{\Delta(G)} (G) = \chi(G^2).$$

In \cite{M}, Montgomery conjectured the following:

{\conjecture \label{conj:montgomery} If $G$ is a regular graph, then $\chi_2(G) \leq \chi(G) +2$.}

Conjecture \ref{conj:montgomery} was proven for several classes of regular graphs, such as bipartite regular graphs \cite{AGJ1}, claw-free graphs \cite{M} and graphs with diameter at most $2$ and chromatic number at least $4$ \cite{A12}. For every $d$-regular graph $G$, Alishahi \cite{A12} provided an upper bound with additional logarithmic term:  $\chi_2(G) \leq \chi(G) + 14.06 \log d + 1$. Ahadi et al.~\cite{AADG} posed the following conjecture, which generalizes Conjecture \ref{conj:montgomery}.

{\conjecture \label{conj:AADG} If $G$ has maximum degree $\Delta$ and minimum degree $\delta$, then $\chi_2(G) \leq \chi(G) + \lceil \frac{\Delta}{\delta} \rceil+1$.}

Recently, Bowler et al. \cite{BELMPS} found a counterexample for Conjecture \ref{conj:montgomery} by constructing a $d$-regular graph with $\chi_r(G)=r \chi(G)$ for large $d$. In this paper, we consider list-coloring-variation of Conjecture~\ref{conj:AADG} for certain classes of graphs.

The \emph{$r$-dynamic choosability} of a graph $G$, denoted by $\ch _r(G)$, is the least positive integer $k$ such that the following holds: 

for any given sets $L(v)$ for each vertex $v\in V(G)$ with $|L(v)| \geq k$, there exists an $r$-dynamic coloring $f : V(G) \rightarrow \bigcup_{v \in V(G)} L(v)$ such that $f(v) \in L(v)$ for all $v\in V(G)$.

Akbari et al.~\cite{AGJ2} proved $\ch_2(G)\le\Delta(G)+1$ when
$\Delta(G)\geq 3$ and no component is the $5$-cycle $C_5$. Kim and Park~\cite{KP} proved $\ch_2(G)\le4$ if $G$ is planar with girth at least $7$, and $\ch_2(G)\le k$ if $k\ge4$ and $\Delta(G)  \leq \frac{4k}{k+2}$. Kim et al. \cite{KLP} proved $\chi_2(G)\le 4$ if $G$ is planar and no component of $G$ is $C_5$; also, they proved $\ch_2(G)\le 5$ if $G$ is planar.

Similarly to Conjecture \ref{conj:montgomery}, Akbari et al. \cite{AGJ2} conjectured that $\ch_2(G) \leq \max\{\ch(G), \chi_2(G)\}$. However, Esperet \cite{E} constructed a graph $G_k$ for each $k \geq 3$ such that $\chi_2(G_k) = \ch(G_k) = 3$ whereas $\ch_2(G_k) \geq k$, thereby disproving Akbari's conjecture. All of Esperet's examples are 2-degenerate and $\ch(G_k) = 3$. In Section \ref{construction}, we construct a more robust class of counterexamples of the conjecture. To be specific, we construct graphs $G$ with small $\chi(G)$ and $\ch(G)$ such that all the value of $\ch_r(G)-\chi_r(G)$, $\chi_r(G)-\ch(G)$ and $\delta(G)$ are all arbitrarily large. These graphs show that finding a good bound on $\ch_r(G)$ using the values of $\chi(G), \ch(G)$ and $\chi_r(G)$ for general graph $G$ may be difficult. However, our graphs is far from regular as $\Delta(G)-\delta(G)$ is big. 

Our main result, Theorem \ref{thm:main}, proves an upper bound on the $r$-dynamic choosability of almost regular graphs in terms of $\ch(G)$ and $\frac{\Delta(G)}{\delta(G)}$. 
\begin{theorem} \label{thm:main}
Suppose $r,s,\ell \in \mathbb{N}$ with $s\geq r-1$ and $r\geq 2$. Suppose that $G$ is a graph with $\Delta(G)=\Delta$, $\delta(G)=\delta$, $\ch(G) = \ell$. If 
$$\left((r+1)\log(\Delta) + (r-1)\log(r) +1\right) \left(\frac{\ell +s}{s}\right)^{r-1}\leq \delta,$$ then we have
 $$\ch_r(G) \leq \ch(G) + s + r-2.$$
\end{theorem}
One implication of Theorem \ref{thm:main} is Corollary \ref{random graphs}: there exists a constant $C$ such that almost all $n$-vertex graphs with average degree at least $6\log(n)$ satisfies $\ch_2(G)\leq \ch(G)+C$. 
Note that if there is an edge in $N_{G}(v)$, then $N_{G}(v)$ is non-monochromatic in any proper coloring of $G$. Thus $\ch(G(n,p)) = \ch_2(G(n,p))$ almost surely holds for $p\gg n^{-\frac{2}{3}}$ since neighborhoods of all vertices contain an edge in $G(n,p)$. On the other hand, for $p\ll n^{-\frac{2}{3}}$, almost surely the neighborhoods of all vertices are independent, so the problem is not trivial. However, Corollary \ref{random graphs} gives an upper bound for the dynamic choosability of $G(n,p)$ for all $p\geq \frac{6\log(n)}{n}$.

Another implication of Theorem \ref{thm:main} is Theorem \ref{triangle-free}: if $G$ is a triangle-free graph with $\delta(G) \geq 6\log(\Delta(G)) + 2$, then $\ch_2(G) \leq \ch(G)+\frac{86\Delta(G)}{\delta(G)}$. As mentioned before, the vertices with independent neighborhoods are dangerous for dynamic coloring, so that the triangle-free graphs are difficult to color dynamically. However, Theorem \ref{triangle-free} gives us a good upper bound for triangle-free graphs in a form similar to Conjecture \ref{conj:AADG}. \bigskip\bigskip\bigskip

\section{Graphs of large $r$-dynamic choosability} \label{construction}

We shall start with introducing the notion of $r$-strong coloring of a hypergraph.
Let $G$ be a graph and let $H(G)$ be the hypergraph on $V(G)$ with the edge set $\{ N(v):v \in V(G)\}$. We say that a vertex coloring of $H(G)$ is \emph{$r$-strong} if each edge $e$ of $H$ have at least $\min\{r,|e|\}$ distinct colors. We define $\chi^r(H)$ be the least $k$ such that there exists an $r$-strong $k$-coloring of $H$. The $r$-strong choosability of $H$, which we denote by $\ch^r(H)$, is the least $k$ such that an $r$-strong coloring can be chosen from the lists whenever each vertex of $H$ is assigned a list of at least $k$ colors. The \emph{incidence graph} of a hypergraph $H$ is the graph $G$ with $V(G) = V(H)\cup E(H)$ and $E(G)=\{ve: v\in e, v\in V(H), e\in E(H)\}$. The following observation is obvious.

\begin{observation} \label{incidence}
If $G$ is the incidence graph of a hypergraph $H$, then $\chi_r(G) \geq \chi^r(H)$, and $\ch_r(G)\geq \ch^r(H)$.
\end{observation}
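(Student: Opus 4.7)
The plan is to unpack both inequalities by restricting an $r$-dynamic (list-)coloring of the incidence graph $G$ to the vertex set $V(H)$, and to check that the definition of $r$-dynamic coloring at the edge-vertices of $G$ translates verbatim into the $r$-strong property on the hyperedges of $H$. The key structural observation is that for every hyperedge $e \in E(H)$, viewed as a vertex of $G$, its neighborhood $N_G(e)$ is precisely the set of vertices incident with $e$ in $H$, so $d_G(e) = |e|$.

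For the first inequality, I would take a proper $r$-dynamic $k$-coloring $f$ of $G$ with $k = \chi_r(G)$ and define $g := f|_{V(H)}$. For each $e \in E(H)$, the $r$-dynamic condition applied to $e \in V(G)$ yields
\[
|g(e)| \;=\; |f(N_G(e))| \;\geq\; \min\{r, d_G(e)\} \;=\; \min\{r, |e|\},
\]
which is exactly the $r$-strong condition. Hence $g$ is an $r$-strong $k$-coloring of $H$ and $\chi^r(H) \leq k$.

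For the list version, suppose $\ch_r(G) = k$ and let $L$ be any list assignment on $V(H)$ with $|L(v)| \geq k$ for all $v \in V(H)$. I would extend $L$ to a list assignment $L^*$ on $V(G) = V(H) \cup E(H)$ by choosing, for each $e \in E(H)$, an arbitrary list $L^*(e)$ of size $k$ (for instance $\{1,\dots,k\}$). By definition of $\ch_r(G)$, there exists a proper $r$-dynamic $L^*$-coloring $f$ of $G$; its restriction to $V(H)$ is an $L$-coloring, and the same neighborhood identity as above shows it satisfies the $r$-strong condition on every hyperedge. Therefore $\ch^r(H) \leq k$.

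There is essentially no obstacle; the observation is purely definitional, and I expect the only care needed is in writing the list version, where one must explicitly extend the list assignment from $V(H)$ to $V(G)$ before invoking the hypothesis on $\ch_r(G)$. The fact that $r$-strong colorings of $H$ need not be proper (in any graph-theoretic sense on $V(H)$) is not an issue, since we impose no conditions other than the color-distinctness on hyperedges.
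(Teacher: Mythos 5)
Your proof is correct and is exactly the definitional argument the paper has in mind (the paper omits the proof, calling the observation obvious): the neighborhood in $G$ of an edge-vertex $e$ is the hyperedge $e$ itself, so restricting an $r$-dynamic coloring of $G$ to $V(H)$ gives an $r$-strong coloring of $H$, and for the list version one extends the list assignment arbitrarily to the edge-vertices before invoking $\ch_r(G)$. No issues.
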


\begin{theorem} \label{example}
For $m,k,r \in \mathbb{N}$ with $k \geq r \geq 2$, there exists a bipartite graph $G$ with 
$$\ch_r(G)-\chi_r(G)\geq m,\enspace \chi_r(G)-\ch(G) \geq m, \enspace \text{and} \enspace \ch(G) \leq k+1.$$
\end{theorem}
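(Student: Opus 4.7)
The plan is to realize $G$ as the incidence graph of a carefully chosen $k$-uniform hypergraph $H$ and then transfer the desired separations from hypergraph parameters using Observation \ref{incidence}. Since $H$ is $k$-uniform, the incidence graph $G$ is bipartite with parts $V(H)$ and $E(H)$, and each vertex on the $E(H)$-side has degree exactly $k$; a greedy list-coloring pass (first colouring $V(H)$ arbitrarily from the lists, then, for each $e\in E(H)$, picking from its list of size $k+1$ a colour distinct from the $k$ neighbour colours) immediately delivers $\ch(G)\leq k+1$.

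To obtain $\chi_r(G)-\ch(G)\geq m$, I would design $H$ so that $\chi^r(H)\geq k+m+2$ and then invoke $\chi_r(G)\geq \chi^r(H)$ from the observation. A convenient way: take $|V(H)|$ very large and include enough $k$-uniform hyperedges so that every $(k-r+2)$-subset of $V(H)$ lies in some hyperedge. Then every $r$-strong coloring must have colour classes of size at most $k-r+1$ (otherwise some hyperedge would contain $k-r+2$ vertices of one colour, leaving only $r-1$ colours in total on its $k$ vertices), whence $\chi^r(H)\geq |V(H)|/(k-r+1)$, which beats $k+m+2$ once $|V(H)|$ is sufficiently large.

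To obtain $\ch_r(G)-\chi_r(G)\geq m$, I would additionally build into $H$ a list-coloring gadget whose $r$-strong choice number substantially exceeds its $r$-strong chromatic number. This is the hypergraph analogue of the Erd\H{o}s--Rubin--Taylor phenomenon $\chi(K_{n,n})=2$ versus $\ch(K_{n,n})\to\infty$: arranging many bipartite-style substructures of $k$-uniform hyperedges and exhibiting a suitable bad list assignment forces $\ch^r(H)\geq \chi^r(H)+m+O(1)$. The inequality $\ch_r(G)\geq \ch^r(H)$ from the observation then pushes $\ch_r(G)$ far above $\chi^r(H)$.

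The main obstacle is that Observation \ref{incidence} supplies only lower bounds on $\chi_r(G)$ and $\ch_r(G)$, whereas the separation $\ch_r(G)-\chi_r(G)\geq m$ also needs an upper bound on $\chi_r(G)$. I would obtain this by using disjoint colour palettes for the two sides of $G$: colour $V(H)$ with an optimal $r$-strong coloring of $H$ (using $\chi^r(H)$ colours), and colour $E(H)$ with an $r$-strong coloring of the dual hypergraph $H^{*}$, so that each $v\in V(H)$ sees $r$ distinct colours among its incident hyperedges. This yields $\chi_r(G)\leq \chi^r(H)+\chi^r(H^{*})$, so the construction must be tuned so that $\chi^r(H^{*})$ is bounded by a constant (depending only on $r$) while $\chi^r(H)$ grows as desired and the gap $\ch^r(H)-\chi^r(H)$ can be made arbitrarily large. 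The delicate step is coordinating these three requirements---large $\chi^r(H)$, small $\chi^r(H^{*})$, and large gap $\ch^r(H)-\chi^r(H)$---while keeping $H$ strictly $k$-uniform so that the bound $\ch(G)\leq k+1$ is preserved.
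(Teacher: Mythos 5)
Your skeleton matches the paper's at a high level: realize $G$ as the incidence graph of a $k$-uniform hypergraph, get $\ch(G)\leq k+1$ from the degree-$k$ side, use Observation~\ref{incidence} for the two lower bounds, and use disjoint palettes on the two sides of the bipartition to upper-bound $\chi_r(G)$. But the actual content of the theorem is the construction of a single $k$-uniform hypergraph that simultaneously has (i) $\chi^r$ large, (ii) $\ch^r-\chi^r\geq m+\mathrm{const}$, and (iii) a constant-cost $r$-strong coloring of the hyperedge side, and your proposal leaves precisely this unresolved. The sentence asserting that ``arranging many bipartite-style substructures\dots forces $\ch^r(H)\geq\chi^r(H)+m+O(1)$'' is the crux, not a routine citation: no bad list assignment is produced, and your density device for (i) --- covering every $(k-r+2)$-subset by a hyperedge --- actively works against (ii), since the Erd\H{o}s--Rubin--Taylor separation lives on structures of \emph{small} chromatic number, whereas your $H$ has $\chi^r(H)\geq |V(H)|/(k-r+1)$ by design. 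If instead the gadget sits on a disjoint vertex set, then not every $(k-r+2)$-subset is covered and the lower bound on $\chi^r(H)$ must be reproved. You flag this coordination problem yourself, but flagging it is not solving it.

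The paper closes these gaps with two moves absent from your write-up. First, it starts from a $(k-r+2)$-uniform hypergraph $H$ with a large gap between its ordinary ($2$-strong) choice and chromatic numbers and with $\chi(H)>m+k$, and pads every edge with one fixed set $X$ of $r-2$ new vertices to obtain a $k$-uniform $H'$; since $X$ can carry at most $r-2$ colors, the $r$-strong condition on $e\cup X$ forces at least two further colors on $e$, so the $2$-strong gap transfers to an $r$-strong gap $\ch^r(H')-\chi^r(H')\geq m+r^2+r$. This reduction to the ordinary hypergraph choosability gap is what lets the separation in (ii) be imported rather than built from scratch. Second, in place of your dual hypergraph $H^{*}$, the paper adds $r$ disjoint perfect matchings as new hyperedges: this raises $\chi^r$ by at most $r^2$ (so (ii) survives with room to spare), and in the incidence graph the matching-vertices receive $r$ fixed colors $\alpha_1,\dots,\alpha_r$, so every vertex of $V(H')$ sees $r$ colors and $\chi_r(G)\leq\chi^r(H')+r$ becomes an honest upper bound with an additive constant $r$ rather than an uncontrolled $\chi^r(H^{*})$. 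Without analogues of these two steps your argument does not compile into a proof.
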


\begin{proof}
Consider a $(k-r+2)$-uniform hypergraph $H$ with $\ch(H)-\chi(H) \geq m+r^2+2r-2$ and $\chi(H) > m+k$. It is well known that such a hypergraph exists. For example, see \cite{PV} to check that a large complete $(m+2k)$-partite $(k-r+2)$-uniform hypergraph suffices.

We take a set of $r-2$ vertices $X$ disjoint from $V(H)$, and replace every edge $e$ of $H$ with $e\cup X$ to get $H'$. It is easy to check $\ch^r(H')-\chi^r(H') \geq \ch(H) - (\chi(H)+r-2) \geq m+r^2+r$. We may assume that the number of vertices in $H'$ is a multiple of $k$ by adding some isolated vertices.
Now we add $r$ disjoint perfect matchings $M_1,M_2,\cdots, M_{r}$ to $H'$, then this may increase $\chi^r(H')$ by at most $r^2$, so we still have $\ch^r(H')-\chi^r(H')\geq m+r$. Let $G$ be the incidence graph of $H'$, and let $\{A,B\}$ be the bipartition of $G$ such that $A=V(H')$, $B=E(H')$.

Since $G$ is $k$-degenerate, $\ch(G)\leq k+1$. By Observation \ref{incidence}, $\chi_r(G) \geq \chi^r(H') >m+k$. We shall take an $r$-strong $\chi^r(H')$-coloring $f$ of $H'$, and let $\alpha_1,\alpha_2,\cdots,\alpha_r$ be new colors not used by $f$. We define a coloring $g$ of $G$ as follows. 

\begin{displaymath}
g(v):=\left\{
\begin{array}{ll}
f(v) & \mbox{ if $v\in A$} \\
\alpha_i & \mbox{ if $v\in M_i \cap B$ for some $i$, $1 \leq i \leq r$} \\
\alpha_r & \mbox{ otherwise}
\end{array}
 \right.
 \end{displaymath}

Since the colors used by $B$ are not used by $A$, the coloring $g$ is a proper coloring. The neighborhood of each vertex in $B$ contains at least $r$ vertices of different colors because $f$ is an $r$-strong coloring. Also, the neighborhood of each vertex in $A$ contains vertices with colors $\alpha_1,\alpha_2,\cdots, \alpha_r$ since it is covered by each of $M_1,M_2,\cdots, M_r$. Thus $g$ is an $r$-dynamic coloring of $G$, and 
$\chi_r(G)\leq \chi^r(H')+r$.

However, we have $\ch_r(G)\geq \ch^r(H)$ by Observation \ref{incidence}.
Thus we conclude 
$$\ch_r(G)-\chi_r(G)\geq m, \enspace \chi_r(G)-\ch(G) \geq m, \enspace \text{and} \enspace \ch(G) \leq k+1.$$
\end{proof}

\section{Proof of Theorem \ref{thm:main}}

Before proving Theorem \ref{thm:main}, we introduce the following simple lemma. For a hypergraph $H$, we say a set $T$ in $V(H)$ is a \emph{transversal} of $H$ if $T$ intersects all edges of $H$. If $T$ is a transversal of $H$ with $|T|=r$, then we say it is an $r$-\emph{transversal}. For a hypergraph $H$, let $\tau(H)$ be the minimum size of a transversal of $H$. Note that for $r'\in [n]$ the number of $r'$-transversals of an $n$-vertex hypergraph $H$ is zero if and only if $r' < \tau(H)$.

\begin{lemma}\label{transversal}
For $r\in \mathbb{N}$ and a $k$-uniform hypergraph $H$, there are at most $k^r$ distinct $r$-transversals of $H$.
\end{lemma}
\begin{proof}
We use induction on $r$.  Take a smallest natural number $r$ such that the lemma doesn't hold. We may assume that $E(H) \neq \emptyset$.

If $r=1$, then we choose an arbitrary edge $e \in E(H)$. Then any $1$-transversal $T$ must satisfy $T\subseteq e$ and $|T|=1$. Thus $H$ has at most $k$ distinct $1$-transversals.

Assume $r>1$. Let $e=\{v_1,v_2,\cdots, v_k\} \in E(H)$. For each $i\in [k]$, we consider the hypergraphs $H_i = H-v_i$.
By the induction hypothesis, for each $i\in [k]$, the number of $(r-1)$-transversals of $H_i$ is at most $t_i \leq k^{r-1}$.
Note that if $A$ is an $r$-transversal of $H$ and $v_i \in A\cap e$, then $A-v_i$ is an $(r-1)$-transversal of $H_i$.
Thus the number of $r$-transversals of $H$ is at most $\sum_{i=1}^{k} t_i \leq k \cdot k^{r-1} \leq k^{r}$. It is a contradiction to the choice of $r$, thus the lemma holds.
\end{proof}

We also know the following simple bound by a slight modification of Theorem 2.1 from \cite{JKOW}.
\begin{theorem}\label{rd}
If a graph $G$ has maximum degree $\Delta$, then 
$$\ch_r(G) \leq r\Delta+1.$$
\end{theorem}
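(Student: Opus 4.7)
The plan is to reduce $r$-dynamic list coloring of $G$ to ordinary list coloring of a carefully constructed auxiliary graph $G'$ with $\Delta(G')\leq r\Delta$, and then invoke the standard greedy bound $\ch(H)\leq \Delta(H)+1$. For each vertex $v$, arbitrarily pick a subset $C(v)\subseteq N(v)$ of size $\min\{r,d(v)\}$, and form $G'$ from $G$ by adding a clique on $C(v)$ for every $v$.

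The first key step is to observe that any proper list coloring $f$ of $G'$ is automatically an $r$-dynamic proper list coloring of $G$. Since $G\subseteq G'$, $f$ is proper on $G$. Moreover, $G'$ contains a clique on $C(v)$, so $f$ restricted to $C(v)$ is injective, giving $|f(N(v))|\geq |f(C(v))|=|C(v)|=\min\{r,d(v)\}$, which is exactly the $r$-dynamic condition at $v$.

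The second key step is to bound $\Delta(G')$. The new edges of $G'$ incident to a vertex $u$ arise only from those cliques $C(v)$ with $u\in C(v)$, and $u\in C(v)\subseteq N(v)$ forces $v\in N(u)$. For each such $v$, the clique on $C(v)$ contributes at most $|C(v)|-1\leq r-1$ new neighbors to $u$. Hence $\Delta(G')\leq \Delta+\Delta(r-1)=r\Delta$, and the greedy bound yields $\ch(G')\leq r\Delta+1$. Combining the two steps gives $\ch_r(G)\leq r\Delta+1$.

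There is no substantive obstacle here; the argument is essentially an encoding trick that converts the dynamic constraint into ordinary proper-coloring constraints on the auxiliary graph, and then proceeds by a standard choosability bound. This is the natural list-coloring analog of the chromatic-number argument of Theorem 2.1 of \cite{JKOW}, with $\chi(G')\leq \Delta(G')+1$ replaced by $\ch(G')\leq \Delta(G')+1$.
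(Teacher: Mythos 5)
Your proof is correct, and it is essentially the argument the paper has in mind: the paper gives no explicit proof, stating only that the bound follows by "a slight modification of Theorem 2.1 from \cite{JKOW}", whose greedy argument designates $\min\{r,d(v)\}$ neighbors of each $v$ to receive distinct colors and observes that each vertex then faces at most $\Delta+\Delta(r-1)=r\Delta$ forbidden colors. Your auxiliary-graph formulation packages exactly that counting, and passing from $\chi(G')\leq\Delta(G')+1$ to $\ch(G')\leq\Delta(G')+1$ is precisely the intended modification, since the greedy bound holds verbatim for lists.
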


Now we prove our main result, Theorem \ref{thm:main}.

\begin{proof}[Proof of Theorem \ref{thm:main}]
The theorem is trivial from Theorem \ref{rd} if $\ell+s+r-2\geq r\Delta +1.$ Thus we may assume $\ell+s+r-2 \leq r\Delta$. Consider a list assignment $L_v$ of $\ell+s+r-2$ colors for every vertex $v \in V(G)$. 
For each $v\in V(G)$, we choose a sub-list $L'_v \in \binom{L_v}{\ell}$ uniformly at random. We consider $H$, the neighborhood hypergraph of $G$, defined as $$V(H)=V(G),\enspace E(H) = \{N_G(v): v\in V(G)\}.$$ 
Note that each edge of $H$ intersects at most $\Delta^2$ other edges since $\Delta(G)\leq \Delta$.

We shall use the Lov\'{a}sz Local Lemma to show that there is a choice of sublists $L'_v$ such that each list coloring $f$ with $f(v) \in L'_v$ yields an $r$-strong coloring of $H$. Since $|L'_v| = \ell = \ch (G)$ for all $v\in V(G)$, we can find a proper coloring of $G$ from this list assignment, which becomes automatically an $r$-dynamic list coloring of $G$.

For each $v\in V(G)$, let $H_v$ and $H'_v$ be the hypergraphs defined on the colors such that 
$$V(H_v)= \bigcup_{w\in N_{G}(v)} L_w \enspace\text{and} \enspace E(H_v) = \{ L_{w} : w\in N_G(v) \},$$ 
$$V(H'_v)= \bigcup_{w\in N_{G}(v)} L'_w\enspace \text{and} \enspace E(H'_v) = \{ L'_{w}: w \in N_G(v)\}.$$  We consider the following event $A_v$.
\begin{equation}\label{event}
A_v : \tau(H'_v) \leq r-1.
\end{equation}
Let us estimate the probability $\pr (A_v)$. Fix a vertex $v$ and let $N_G(v)=\{ u_1,u_2,\cdots, u_{d_G(v)}\}$. For an $(r-1)$-transversal $P$ of $H_v$ and $i\in [d_G(v)]$, the probability that $L'_{u_i}$ intersect $P$ is at most
$$1- \frac{{{\ell+s+r-2-|P\cap L_{u_i}|}\choose{\ell}}}{{{\ell+s+r-2}\choose {\ell}}}\leq 1- \frac{{{\ell+s-1}\choose{\ell}}}{{{\ell+s+r-2}\choose {\ell}}}.$$
Let $B_v(P)$ be the event that $P$ is a transversal of $H'_v$. In other words, 
$P$ intersects every edge of $H'_v$.
Since the choice of $L'_u$ and the choice of $L'_w$ are independent for two distinct vertices $u,w \in N_G(v)$, we have
$$ \pr (B_v(P)) \leq \prod_{i=1}^{d_G(v)} \left(1-\frac{{{\ell+s-1}\choose{\ell}}}{{{\ell+s+r-2}\choose {\ell}}} \right) \leq \left(1-(\frac{s}{\ell+s})^{r-1}\right)^\delta < e^{-\delta(\frac{s}{\ell+s})^{r-1}} $$

Let $\mathbf{T}$ be the set of all $(r-1)$-transversals of $H_v$, then by Lemma \ref{transversal}, $|\mathbf{T}|\leq (\ell+s+r-2)^{r-1}$. Since every edge of $H'_v$ is a subset of an edge of $H_v$, every $(r-1)$-transversal of $H'_v$ also belongs to $\mathbf{T}$. Note that $\tau(H'_v) \leq r-1$ if and only if there exists a set $P\in \mathbf{T}$ which is a transversal of $H'_v$.  Thus for all $v\in V(G)$,
$$\pr (A_v) \leq \sum_{P\in \mathbf{T}} \pr (B_v(P)) < (\ell+s+r-2)^{r-1} e^{-\delta(\frac{s}{\ell+s})^{r-1}} $$ 
However, events $A_v$ is mutually independent of the set $\{ A_u: N_{G}(v)\cap N_{G}(u)  = \emptyset\}$ of events.
So each event $A_v$ is mutually independent of all but at most $\Delta^2$ of other events $A_u$.  Since $ ((r+1)\log(\Delta) + (r-1)\log (r) +1) (\frac{\ell+s}{s})^{r-1}\leq \delta$, for each $v\in V(G)$ we have 
$$e\Delta^2 \pr (A_v) < e\Delta^2(\ell+s+r-2)^{r-1} e^{-\delta(\frac{s}{\ell+s})^{r-1}} \leq \frac{ e\Delta^2(\ell+s+r-2)^{r-1}}{ e r^{r-1}\Delta^{r+1}} \leq  1.$$
By the Lov\'{a}sz Local Lemma, there is a choice of sublists $L'_v$ which avoids all the events $A_v$ simultaneously. 
Since $|L'_v| = \ch(G)$ for every $v\in V(G)$, there exists a proper coloring $f$ of $G$ such that $f(v) \in L'_v$. Moreover, $f(N_G(v))$ is a transversal of $H'_v$, thus  $|f(N_G(v))| \geq r$ as $\tau(H'_v) \geq r$. Therefore, the coloring $f$ is in fact a $r$-dynamic list coloring of $G$, and $\ch_r(G)\leq \ell+s+r-2$.\end{proof}

\section{Consequences of Theorem \ref{thm:main}}

Theorem \ref{thm:main} immediately implies the following corollary, which shows that $\ch_r(G) \leq (1+o(1))\ch(G)$ for graphs $G$ with bounded $\frac{\Delta(G)}{\delta(G)}$.
\begin{corollary}\label{upper bound}
For given $\epsilon>0$, $r \in \mathbb{N}\setminus\{1\}$ and $k\geq 1$, there exists $\ell_0= \ell_0(r,k,\epsilon)$ such that the following holds.
Suppose that $G$ is a graph with $\Delta(G)=\Delta$, $\delta(G) =\delta$ and $\frac{\Delta}{\delta} \leq k$.
If $\ch(G) \geq \ell_0$, then 
$$\ch_r(G) \leq (1+\epsilon) \ch(G) .$$
\end{corollary}
\begin{proof}
Let $\ch(G)=\ell$. It is enough to prove that if $\ell \geq 6^{2r}r^{3r}k^2$  then
$$ \ch_r(G) \leq \ell + \lceil (3kr \ell^{r-2} \log(\ell))^{\frac{1}{r-1}} \rceil +r-2.$$
If $G$ is an odd cycle or a clique, then the inequality is trivial. So we may assume $\ell \leq \Delta$. Now we choose $s = \lceil (3kr \ell^{r-2} \log(\ell) )^{\frac{1}{r-1}}\rceil$. 
Since $\ell \geq 6^{2r} r^{3r} k^2$, we have $\frac{\ell}{\log(\ell)} \geq 6^{r} r^{2r-1} k$ which implies $\frac{r^2 s}{\ell} \leq \frac{1}{3}$. We need to verify the condition $((r+1)\log( \Delta) +(r-1)\log(r)+ 1)(\frac{\ell+s}{s})^{r-1} \leq \delta$ to apply Theorem \ref{thm:main}. Note that $$((r+1)\log(\Delta) +(r-1)\log(r)+ 1) \leq (1+\frac{1}{r+1})(r+1)\log(\Delta) \leq \frac{4}{3}(r+1)\log(\Delta) ,$$ 
since $\Delta \geq \ell \geq r^{3r}$. We also have
$$(\frac{\ell+s}{s})^{r-1} \leq (1+ \frac{rs}{\ell} + \frac{r^2s^2}{\ell^2} + \cdots + \frac{r^{r-1}s^{r-1}}{\ell^{r-1}}) (\frac{\ell}{s})^{r-1} \leq (1+\frac{r^2 s}{\ell})(\frac{\ell}{s})^{r-1} \leq \frac{4}{3}(\frac{\ell}{s})^{r-1}. $$
Hence, it is enough to show
$$ \frac{16}{9} (r+1) \log(\Delta) (\frac{\ell}{s})^{r-1} \leq \frac{\Delta}{k} \leq \delta,$$
which follows from
$$ 3kr (\frac{\ell}{s})^{r-1}\leq \frac{\ell}{\log(\ell)} \leq \frac{\Delta}{\log(\Delta)}.$$
Thus we can apply Theorem \ref{thm:main} to conclude the theorem.
\end{proof}

\begin{corollary}\label{cor: 4.2}
Let $G$ be a graph with $\Delta(G)=\Delta$ and $\delta(G)=\delta$. 
If $((r+1)\log(\Delta) + (r-1)\log(r)+ 1) (\ch(G)+1)^{r-1} \leq \delta$, then
$$\ch_r(G) \leq \ch(G)+ r-1.$$
In particular, if $r=2$, then $ (3\log(\Delta) + 2) (\ch(G)+1) \leq \delta $ implies $\ch_2(G)\leq \ch(G)+1$.
\end{corollary}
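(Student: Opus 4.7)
The plan is to obtain both claims directly from Theorem~\ref{thm:main} by choosing $s=1$. With this choice, the factor $\bigl(\frac{\ch(G)+s}{s}\bigr)^{r-1}$ appearing in the hypothesis of Theorem~\ref{thm:main} collapses to $(\ch(G)+1)^{r-1}$, which is exactly the factor appearing in the hypothesis of the corollary. Simultaneously, the conclusion $\ch_r(G)\le\ch(G)+s+r-2$ becomes $\ch_r(G)\le\ch(G)+r-1$, the bound asserted by the corollary. Thus the first statement will follow as an immediate instance of Theorem~\ref{thm:main}.

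For the ``in particular'' assertion I will specialize the general statement to $r=2$. The hypothesis then reads $(3\ln\Delta + \ln 2 + 1)(\ch(G)+1)\le\delta$; because $\ln 2+1<2$, the slightly stronger hypothesis $(3\ln\Delta+2)(\ch(G)+1)\le\delta$ implies it, and therefore yields $\ch_2(G)\le\ch(G)+1$.

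No substantive obstacle arises, since the entire proof is a single substitution into Theorem~\ref{thm:main}. The only minor point to verify is that $s=1$ is an admissible parameter: the probability estimate in the proof of Theorem~\ref{thm:main}, in particular the bound on the binomial ratio $\binom{l+s-1}{l}/\binom{l+s+r-2}{l}$, uses nothing beyond $s\ge 1$, so the argument carries over verbatim.
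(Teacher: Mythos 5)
Your derivation is correct and matches the paper's intent: the corollary is stated without proof as the instance $s=1$ of Theorem~\ref{thm:main}, which gives hypothesis $((r+1)\ln\Delta+(r-1)\ln r+1)(l+1)^{r-1}\le\delta$ and conclusion $\ch_r(G)\le l+r-1$ exactly as claimed, with the $r=2$ case following since $\ln 2+1<2$. Your closing remark is the one genuinely necessary check, because for $r\ge 3$ the choice $s=1$ violates the theorem's stated side condition $s\ge r-1$; you correctly observe that the theorem's proof only ever uses $s\ge 1$ (the key estimate $\binom{l+s-1}{l}/\binom{l+s+r-2}{l}\ge\bigl(\tfrac{s}{l+s}\bigr)^{r-1}$ holds for all $s\ge 1$), so the argument goes through.
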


In \cite{A}, Alon mentioned Kahn's proof that shows almost surely $\ch(G(n,\frac{1}{2})) = (1+o(1))\frac{n}{2\log(n)}$.
Also, in \cite{AKS}, Alon, Krivelevich and Sudakov showed that there exists an absolute constant $c_1,c_2$ such that the random graph $G(n,p)$ almost surely satisfies $c_1\frac{np}{\log(np)}\leq \ch(G(n,p))\leq c_2\frac{np}{\log(np)}$ for all $\frac{2}{n} < p \leq \frac{1}{2}$. Since $G(n,p)$ with $p > \frac{6 \log(n)}{n}$ almost surely satisfies $\frac{\Delta(G(n,p))}{\delta(G(n,p))} \leq 12$,
these facts combined with Theorem \ref{thm:main} show the following.

\begin{corollary}\label{random graphs}
There exists an absolute constant $C$ such that for any $ \frac{6\log(n)}{n}< p =p(n) \leq 1$, $G=G(n,p)$ almost surely satisfies 
$ \ch_2(G) \leq \ch(G)+C.$\end{corollary}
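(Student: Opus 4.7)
The plan is to apply Theorem \ref{thm:main} in the specialization $r=2$ and show that the slack parameter $s$ may be chosen to be an absolute constant. Writing $l=\ch(G)$, the hypothesis of Theorem \ref{thm:main} with $r=2$ reduces to
\[
(3\ln\Delta+\ln 2+1)\cdot\frac{l+s}{s}\le\delta,
\]
and the conclusion is $\ch_2(G)\le l+s$. So the task is to produce an integer $s=O(1)$, independent of $n$ and $p$, such that this hypothesis holds asymptotically almost surely.

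Next I would substitute the two probabilistic inputs that the authors have just recorded. By the result of Alon--Krivelevich--Sudakov there is an absolute constant $c_2$ with $l\le c_2\cdot np/\ln(np)$ a.a.s., and in the range where $np\to\infty$ standard binomial concentration gives $\Delta,\delta=(1+o(1))np$ a.a.s. Substituting both into the hypothesis, its left side is dominated by
\[
(3\ln(np)+O(1))\left(\frac{c_2}{s}\cdot\frac{np}{\ln(np)}+1\right)=\left(\frac{3c_2}{s}+o(1)\right)np,
\]
while the right side is $(1+o(1))np$. Hence any integer $s>3c_2$ satisfies the hypothesis a.a.s., and Theorem \ref{thm:main} then yields $\ch_2(G)\le\ch(G)+s$. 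Taking $C:=\lceil 3c_2\rceil+1$ completes the proof in this regime.

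The main obstacle is ensuring that the concentration estimate $\delta=(1+o(1))np$ holds uniformly across the full range $\tfrac{2}{n}<p\le\tfrac{1}{2}$: this requires $np\to\infty$ and in particular fails once $np\lesssim\log n$, since then $G(n,p)$ has isolated vertices a.a.s.~and $\delta=0$. In that genuinely sparse regime one still has $\Delta,\ch(G)=O(\log n/\log\log n)$, but a separate argument is needed to preserve the absolute additive gap---either by carrying out the Lov\'{a}sz Local Lemma computation inside the proof of Theorem \ref{thm:main} directly on the bounded-degeneracy structure of $G(n,p)$, or by peeling off the few atypical vertices and extending a coloring from the regular core back to them at the cost of absorbing an additive constant into $C$. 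Carrying this out rigorously all the way down to $p=2/n$ is the delicate point of the argument.
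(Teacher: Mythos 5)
Your argument in the regime where $np\to\infty$ is precisely the paper's proof: apply Theorem \ref{thm:main} with $r=2$, bound $l=\ch(G)\le c_2\,np/\ln(np)$ via Alon--Krivelevich--Sudakov, use $\Delta,\delta=(1+o(1))np$, and choose $s$ to be a fixed multiple of $c_2$ so that $(3\ln\Delta+\ln 2+1)\frac{l+s}{s}\le\delta$ holds a.a.s. (the paper takes $s=9c_2$ and checks $\frac{l+s}{s}\le\frac{\delta}{4\ln\Delta}$; your choice $s>3c_2$ is the same computation with tighter constants).

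The ``delicate point'' you flag at the end is a genuine issue, and you should know that the paper does not resolve it either: the published proof simply asserts $\delta=(1+o(1))np$ almost surely for the entire range $\frac{2}{n}<p\le\frac{1}{2}$, which fails once $np=O(\ln n)$ --- below the connectivity threshold $G(n,p)$ has isolated vertices a.a.s.\ and $\delta=0$, and even at $np=c\ln n$ the minimum degree is only a constant fraction of $np$, not $(1+o(1))np$. In that sparse regime the hypothesis of Theorem \ref{thm:main} cannot be verified (indeed $\delta$ can vanish), and neither your sketched remedies (recomputing the Local Lemma on the degenerate structure, or peeling off atypical vertices) nor anything in the paper actually closes the case. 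So your proposal reproduces the paper's argument exactly where that argument is valid, and correctly identifies the range of $p$ on which both proofs are incomplete; to be fully rigorous the corollary would need either a separate treatment of $np=O(\ln n)$ or a restriction of the stated range of $p$.
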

\begin{proof}
Note that if $p>1/2$, then with high probability, $N_{G}(v)$ contains an edge  for all $v\in V(G)$. Thus $\ch_2(G) = \ch(G)$ almost surely holds. So, we may assume that $p\leq 1/2$.

Simple calculation with Chernoff bound shows that the random graph $G=G(n,p)$ almost surely satisfies $np - \sqrt{ 4 pn \log(n) } \leq \delta(G) \leq \Delta(G) \leq np + \sqrt{4 pn \log(n)}$. 
Thus, for $p> \frac{6\log(n)}{n}$, we have
$$ np/6 \leq  \delta(G)\leq \Delta(G)\leq 2np.$$
So we almost surely have $$ \frac{np}{ 30 \log(np)} \leq \frac{\delta(G)}{4 \log(\Delta)}.$$ Let $s = \lceil 40c_2 \rceil$ where $c_2$ is the constant in \cite{AKS} (see paragraph preceding Corollary~\ref{random graphs}) such that $\ch(G) \leq \frac{c_2 np }{\log(np)}$, so that almost surely we have
$$\frac{\ch(G) +s}{s} = 1 + \frac{\ch(G) }{s} \leq \frac{np}{30\log(np)} \leq \frac{\delta(G)}{3 \log(\Delta(G)) +2} .$$
Thus 
$$\left(3\log(\Delta) + \log(2) +1\right) \left(\frac{\ch(G) +s}{s}\right) \leq \delta.$$
Hence, Theorem \ref{thm:main} implies that almost surely we have that $\ch_2(G)\leq \ch(G)+s$.
\end{proof}

In \cite{J}, Johansson proved $\ch(G)\leq \frac{9\Delta(G)}{\log_2(\Delta(G))}\leq  \frac{13\Delta(G)}{\log(\Delta(G))}$ for all triangle-free graph $G$. Johansson's result combined with our Theorem \ref{thm:main} shows the following.

\begin{theorem}\label{triangle-free}
Let $G$ be a triangle-free graph with $\Delta(G)=\Delta$ and $\delta(G)=\delta$. If $\delta \geq 6\log(\Delta)+2$, then 
$$\ch_2(G) \leq \ch(G)+ \frac{86\Delta}{\delta}.$$
In particular, if $G$ is a regular graph, then $\ch_2(G)\leq \ch(G)+86$.
\end{theorem}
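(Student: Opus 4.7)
The plan is to apply Theorem \ref{thm:main} at $r=2$, combined with Johansson's bound $\ch(G)\le 13\Delta/\ln\Delta$ from \cite{J}, which holds for every triangle-free graph. Write $l:=\ch(G)$. Specializing Theorem \ref{thm:main} to $r=2$ gives: if $s\ge 1$ is a positive integer with
$(3\ln\Delta+\ln 2+1)\cdot\frac{l+s}{s}\le\delta$,
then $\ch_2(G)\le l+s$. Hence it suffices to produce a positive integer $s\le 86\Delta/\delta$ that verifies this hypothesis, from which $\ch_2(G)\le \ch(G)+86\Delta/\delta$ follows at once.

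For $\Delta$ above an explicit threshold I would take $s:=\lfloor 86\Delta/\delta\rfloor$, which is at least $86$ because $\delta\le\Delta$. Rewriting the hypothesis as
$l(3\ln\Delta+\ln 2+1)\le s(\delta-3\ln\Delta-\ln 2-1)$,
I would substitute Johansson's bound $l\le 13\Delta/\ln\Delta$, the inequality $s\ge 86\Delta/\delta-1$, and the assumption $\delta\ge 6\ln\Delta+2$ (which keeps $\delta-3\ln\Delta-\ln 2-1$ positive and of the same order as $3\ln\Delta$). The verification then collapses to a single one-variable inequality in $\ln\Delta$, and the constant $86$ has been picked with enough slack to absorb the $-1$ from the floor together with the lower-order ``$\ln 2+1$'' and ``$+2$'' corrections.

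Below that threshold $\Delta$ is bounded by an absolute constant, and one obtains the bound directly from Theorem \ref{rd}: $\ch_2(G)\le 2\Delta+1$, so $\ch_2(G)-\ch(G)\le 2\Delta$, which is at most $86$ in this regime and hence at most $86\Delta/\delta\ge 86$. The ``in particular'' claim for regular graphs is then immediate: in the case $\delta=\Delta$ the first part yields $86\Delta/\delta=86$ whenever the hypothesis is satisfied, and when it is not, $\Delta$ is small and the same fallback via Theorem \ref{rd} applies.

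The main obstacle is the constant chase inside the verification step, since the coefficient $86$ is fairly tight. One must check the inequality at its tightest instance $\delta=6\ln\Delta+2$, where the quotient $l(3\ln\Delta+\ln 2+1)/(\delta-3\ln\Delta-\ln 2-1)$ is maximized relative to $\Delta/\delta$. Away from this extremal case the inequality relaxes monotonically in $\delta$ and asymptotically in $\ln\Delta$, so once the boundary values of $\ln\Delta$ have been handled (either by the refined estimate or by the trivial degree bound) the rest of the argument is routine.
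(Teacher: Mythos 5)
Your overall strategy --- specialize Theorem \ref{thm:main} to $r=2$, insert Johansson's bound $\ch(G)\le 13\Delta/\ln\Delta$, take $s\approx 86\Delta/\delta$, and fall back on Theorem \ref{rd} when the hypothesis of Theorem \ref{thm:main} cannot be verified --- is the paper's strategy. The problem is how you glue the two regimes. Your fallback covers only $\Delta\le 43$, since the chain $\ch_2(G)-\ch(G)\le 2\Delta\le 86\le 86\Delta/\delta$ needs $2\Delta\le 86$. But your main verification does not begin to work at $\Delta=44$. At the tight instance $\delta=6\ln\Delta+2$, with $s=\lfloor 86\Delta/\delta\rfloor$ and $l=13\Delta/\ln\Delta$, one has $l/s\approx\frac{13\delta}{86\ln\Delta}=\frac{78}{86}+\frac{26}{86\ln\Delta}$, so the quantity to be bounded is roughly $(3\ln\Delta+\ln 2+1)\bigl(1+\tfrac{78}{86}\bigr)\approx 5.73\ln\Delta+4.1$, and this falls below $\delta=6\ln\Delta+2$ only once $\ln\Delta\gtrsim 8$, i.e.\ $\Delta$ is in the thousands. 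Concretely, at $\Delta=44$, $\delta=25$ you get $s=151$, $l\le 151$, and $(3\ln 44+\ln 2+1)\cdot\frac{l+s}{s}\approx 13.0\cdot 2=26.1>25$. So the whole range $43<\Delta\lesssim 3000$ with $\delta$ near $6\ln\Delta+2$ is covered by neither half of your argument; the ``routine constant chase'' you defer actually fails there.

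The paper glues the regimes differently, and this is the idea you are missing: it invokes Theorem \ref{rd} not when $\Delta$ is small but when $\delta$ is small. If $\delta\le 43$ then $2\Delta\le 86\Delta/\delta$, hence $\ch_2(G)\le 2\Delta+1\le\ch(G)+86\Delta/\delta$ for \emph{every} $\Delta$, which reduces the problem to $\delta\ge 43$ --- a reduction on the minimum degree that your case split does not achieve. Even so, to close the intermediate range one must replace Johansson's bound by the trivial $l\le\Delta+1$ (smaller than $13\Delta/\ln\Delta$ whenever $\ln\Delta<13$, which makes $l/s$ about $\frac{(\Delta+1)\delta}{86\Delta}$ and the hypothesis easy to check); your proposal commits to Johansson's bound only. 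I note for fairness that the paper's own displayed chain is also delicate at exactly this spot: its step $3\ln\Delta+2\le\delta/3$ needs $\delta\ge 9\ln\Delta+6$, which it only justifies in the regular case. Your argument, like the paper's, does close for regular graphs: with $\delta=\Delta$ and $s=86$ the hypothesis of Theorem \ref{thm:main} holds for all $\Delta\ge 43$, and $\Delta\le 43$ is absorbed by Theorem \ref{rd}.
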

\begin{proof}
Since $\ch_2(G)\leq 2\Delta(G)+1$ by Theorem \ref{rd}, we may assume $\Delta \geq \delta \geq 43$. Thus every regular graph in our consideration satisfies $\delta\geq 9\log(\Delta)+6$. Let $s= \frac{86\Delta}{\delta}$ and we apply Theorem \ref{thm:main}. Note that $13 \leq 4 \log (43)$.

We only have to check $(3\log(\Delta) + 2)\frac{\ch(G)+s}{s} \leq \delta$. Since $\ch(G)\leq \frac{13\Delta}{\log(\Delta)}$,
$$(3 \log(\Delta) + 2) (1 + \frac{\ch(G)}{s}) \leq (3 \log(\Delta) + 2) + (3 \log(\Delta)+2 ) \frac{13\delta}{86 \log(\Delta)} \leq \frac{\delta}{3} + \left( \frac{39 \log(\Delta) + 26}{86\log(\Delta)}\right) \delta \leq \delta.$$
Thus by Theorem \ref{thm:main}, $\ch_2(G)\leq \ch(G) + \frac{86\Delta}{\delta}$.\end{proof}

More generally, Vu \cite{V} proved that there exists a positive constant $K$ such that for any graph $G$, if $G[N_G(v)]$ contains at most $\frac{\Delta^2}{f}$ edges for all $v \in V(G)$, then $\ch(G) \leq \frac{K\Delta}{\log(f)}$. By using this as in the proof of Theorem \ref{triangle-free}, we get the following.

\begin{corollary}
Let $G$ be a graph with $\Delta(G)=\Delta, \delta(G)=\delta>0$. Then, there exists a constant $K'$ satisfying the following.
If for each $v\in V(G)$, the neighborhood $G[N_G(v)]$ contains at most $\frac{\Delta^2}{f}$ edges, then 
$$\ch_2(G) \leq \ch(G)+ \frac{K'\Delta \log(\Delta) }{\delta\log(f)}.$$
\end{corollary}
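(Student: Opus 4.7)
The approach is a direct generalization of the proof of Theorem \ref{triangle-free}, with Vu's bound on $\ch(G)$ for graphs whose neighborhoods are sparse substituted for Johansson's bound for triangle-free graphs. As in that proof, I would apply Theorem \ref{thm:main} with $r=2$, choosing the parameter $s$ to balance Vu's bound against the degree hypothesis.

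\textbf{Steps.} Let $l := \ch(G)$. By Vu's theorem applied to $G$, the neighborhood-edge hypothesis gives $l \leq K\Delta/\ln f$. I would then set
\[
s := \left\lceil \frac{K'\Delta\ln\Delta}{\delta\ln f}\right\rceil
\]
for a sufficiently large constant $K'$ (depending only on $K$ and $c$) to be determined, and verify the hypothesis of Theorem \ref{thm:main} with $r=2$, namely
\[
(3\ln\Delta + 2)\,\frac{l+s}{s}\leq \delta.
\]
Once this is confirmed, Theorem \ref{thm:main} yields $\ch_2(G)\leq \ch(G)+s$, which is the desired inequality.

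\textbf{Key calculation.} The crucial estimate is
\[
\frac{l+s}{s}=1+\frac{l}{s}\leq 1+\frac{K\delta}{K'\ln\Delta}.
\]
Multiplying by $3\ln\Delta+2\leq 4\ln\Delta$ (valid once $\Delta$ is at least a small absolute constant) gives an upper bound of roughly $4\ln\Delta+4K\delta/K'$. Choosing $K'\geq 8K$ bounds the second term by $\delta/2$, and the inequality $4\ln\Delta\leq \delta/2$ then follows from $\delta\geq \Delta/c$ as soon as $\Delta$ exceeds a constant $\Delta_0(c)$ with $\Delta_0\geq 8c\ln\Delta_0$. Combining these two bounds yields the required condition.

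\textbf{Main obstacle.} The essential content is identical to the proof of Theorem \ref{triangle-free}; the real work is bookkeeping the small-parameter regimes. When $\Delta$ is bounded in terms of $c$, or when $\ln f$ is too small for the asserted bound to be meaningful, the condition of Theorem \ref{thm:main} need not hold, and one must fall back on $\ch_2(G)\leq 2\Delta+1$ from Theorem \ref{rd} and absorb the resulting constant into $K'$. One must also take $s\geq 1$ so that the requirement $s\geq r-1=1$ in Theorem \ref{thm:main} is met, but this is automatic from the ceiling. Once these edge cases are dispatched, the core of the argument is the single-line manipulation above.
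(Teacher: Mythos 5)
Your proposal is correct and matches the paper's intended argument exactly: the paper gives no separate proof of this corollary, stating only that it follows ``by using [Vu's theorem] as in the proof of Theorem \ref{triangle-free},'' and your substitution of Vu's bound $\ch(G)\leq K\Delta/\ln f$ for Johansson's bound, together with the choice $s=\lceil K'\Delta\ln\Delta/(\delta\ln f)\rceil$ and the verification of $(3\ln\Delta+2)\frac{l+s}{s}\leq\delta$, is precisely that argument. Your handling of the small-$\Delta$ regime via Theorem \ref{rd} is also the right (and necessary) bookkeeping, which the paper omits.
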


\medskip

{\footnotesize \obeylines \parindent=0pt

Jaehoon Kim, School of Mathematics, University of Birmingham, Edgbaston, Birmingham, B15 2TT, UK \\ {\rm{kimJS@bham.ac.uk}} }

{\footnotesize \obeylines \parindent=0pt

Seongmin Ok, School of Computational Sciences, Korea Institute for Advanced Study, 02455 Seoul, Korea \\ seong@kias.re.kr
}

\begin{thebibliography}{99}


\bibitem{AADG}
A. Ahadi, S. Akbari, A. Dehghan, M. Ghanbari, 
On the difference between chromatic number and dynamic chromatic number of graphs, 
\empty{Discrete Math.} 312 (2012) 2579--2583. 

\bibitem{AGJ1}
S. Akbari, M. Ghanbari, S. Jahanbekam,
On the dynamic chromatic number of graphs,
\emph{Combinatorics and graphs}.
\emph{Contemp. Math.} 531 (Amer. Math. Soc. 2010), 1--18.

\bibitem{AGJ2}
S. Akabari, M. Ghanbari, S. Jahanbekam,
On the list dynamic coloring of graphs,
\emph{Discrete Applied Math.} 157 (2009), 3005--3007.

\bibitem{A}
N. Alon, Restricted colorings of graphs, in: \emph{Surveys in Combinatorics, Proc. 14th British Combinatorial Conference}, London Mathematical Society Lecture Notes Series 187, (K. Walker ed.), Cambridge University Press, 1993, 1–-33.

\bibitem{A12}
M. Alishahi, Dynamic chromatic number of regular graphs,
\emph{Discrete Applied Math.} 160 (2012), 2098--2103.

\bibitem{AKS}
N. Alon, M. Krivelevich, B. Sudakov,
list coloring of random and pseudo-random graphs,
\emph{Combinatorica.} 19 (1999) 453--472.

\bibitem{Ba}
I. B\'ar\'any,
A short proof of Kneser's conjecture,
\emph{J. Combin. Theory Ser. A} 25 (1978), 325--326.

\bibitem{BELMPS}
N. Bowler, J. Erde, F. Lehner, M. Merker, M. Pitz, K. Stavropoulos,
A counterexample to Montgomery's conjecture on dynamic colourings of regular graphs,
\emph{arxiv:1702.00973}.


\bibitem{B}
R.L. Brooks, 
On colouring the nodes of a network,
\emph{Proc. Cambridge Philos. Soc.} 37 (1941), 194--197. 

\bibitem{E}
Louis Esperet,
Dynamic list coloring of bipartite graphs,
\emph{Discrete Applied Math.} 158 (2010) 1963--1965.

\bibitem{PV}
P. Haxell, J. Verstra\"{e}te, 
List Coloring Hypergraphs,
\emph{Electron. J. Combin.} 17 $\#$R129, 2010.

\bibitem{JKOW}
S. Jahanbekam, J. Kim, S. O, D. West,
\newblock On $r$-dynamic coloring of graphs, 
\emph{Discrete Applied Math.} 206 (2016), 65--72.


\bibitem{J}
A. Johansson, 
Asymptotic choice number for triangle free graphs, 
\emph{ DIMACS Technical Report} (1996) 91–95.

\bibitem{KP}
S.-J. Kim, W.-J. Park,
List dynamic coloring of sparse graphs, 
\emph{Combinatorial optimization and applications},
\emph{Lect. Notes Comput. Sci.} 6831 (Springer, 2011), 156--162.

\bibitem{KLP}
S.-J. Kim, S. J. Lee, W.-J. Park,
Dynamic coloring and list dynamic coloring of planar graphs,
\emph{Discrete Applied Math.} 161 (2013), 2207--2212.

\bibitem{M}
B. Montgomery,
Dynamic Coloring of Graphs,
Ph.D Dissertation (West Virginia University, 2001).

\bibitem{V}
V. Vu, 
A General Upper Bound on the List Chromatic Number of Locally Sparse Graphs,
\emph{Combin. Probab. Comput.} 11 (2002), 103--111.

\end{thebibliography}
\end{document}